\newtheorem{theorem}{Theorem}[section]
\newtheorem{proposition}{Proposition}[section]
\newtheorem{remark}{Remark}
\def\eps{\varepsilon}
 \newcommand{\ue}{u^{\eps}} 
\newcommand{\we}{w^{\eps}}
 \newcommand{\Z}{{\mathbb Z}}
\newcommand{\R}{{\mathbb R}}
\begin{document}

\title[Speed of propagation for a semilinear parabolic equation]{Asymptotic speed of propagation for a viscous semilinear parabolic equation}

\author{Annalisa Cesaroni}
\address{Dipartimento di Scienze Statistiche
Universit\`a di Padova
Via Cesare Battisti 241/243,
35121 Padova, Italy}
\author{Nicolas Dirr}
\address{Cardiff School of Mathematics
Cardiff University,
Senghennydd Road, Cardiff, Wales, UK}
\author{Matteo Novaga}
\address{Dipartimento di Matematica
Universit\`a di Pisa,
Largo Bruno Pontecorvo 5,
56127 Pisa, Italy}

\begin{abstract} We characterize the asymptotic speed 
of propagation of almost planar solutions to a semilinear viscous parabolic equation, with periodic nonlinearity. 
\end{abstract}

%
%
\maketitle

\section*{Introduction}
We are interested in the asymptotic behavior, as $\eps\to 0$, of solutions to the following 
problem
\begin{equation}\label{uno} 
u^\eps_t - \eps^\alpha \Delta u^\eps +g\left(\frac{\ue}{\eps}\right)=0 \qquad x\in\R^n, t>0 
\end{equation} where $\alpha\in [0,1)$ and  $g:\R \to \R$ is a Lipschitz continuous, 
$1$-periodic function, with 
$g(v)\leq 0$.

If we perform the parabolic rescaling 
\[v^\eps(x,t)=\frac{1}{\eps} u^\eps(\eps x, \eps^{2-\alpha}t), \]
equation \eqref{uno} becomes 
\begin{equation}\label{ltb} v^\eps_t-  \Delta v^\eps +\eps^{1-\alpha} g\left(v^\eps  \right)=0. \end{equation}
Therefore, the analysis of the limit as $\eps\to 0$ of solutions to \eqref{uno} 
is related to the long time behavior of solutions of \eqref{ltb}.  
 
When $\eps=1$ or equivalently $\alpha=1$, the long time behavior has been   considered in the literature by several authors, also in the case  the laplacian is substituted by a more general elliptic operator, and  the existence of special solutions such as traveling or pulsating waves has been established (see \cite{dky, bh, nr, dy, bs, ls, cm, cls, cn} and references therein). 
When $\int_0^1 g=0$, in \cite{j,m} it has been studied the limit behavior of solutions of the following  long time rescaling of \eqref{uno} \[u^\eps_t - \Delta u^\eps +\frac{1}{\eps}g\left(\frac{\ue}{\eps}\right)=0 \qquad x\in\R^n, t>0.\]

In this note we are interested in the case $\alpha\in [0,1)$.
The main result is Theorem \ref{omo} which provides the effective speed of propagation of solutions to \eqref{uno} starting from almost planar initial datum.  In particular we show that the solution  to \eqref{uno} with initial datum $u^\eps(x,0)=p\cdot x+k$ satisfies \[\lim_{\eps\to 0}u^\eps(x,t)=p\cdot x+k+c(p) t\quad \text{uniformly in $x,t$}\] where   the  average positive speed $c(p)$ depends on $g$. 

The proof of this result relies on maximum principle arguments and on a priori bounds on the gradient 
to solutions to \eqref{uno} with planar initial data. These bounds are provided by Theorem \ref{bernstein} and are based on the so called Bernstein method. 

 The interesting feature we observe is that in the case $\alpha<1$, the asymptotic speed of propagation  $c(p)$ is just lower semicontinuous, but not continuous with respect to $p$,  whereas when  $\alpha=1$, the speed $c(p)$  depends continuously on $p$, as it has been proved in \cite{dky, cls}.  In particular, we show  in Proposition \ref{effective} that \[c(p)=\begin{cases} - \int_0^1 g(s)ds & p\neq 0\\  -(\int_0^1 g^{-1}(s)ds)^{-1}& p=0\end{cases}\] so that the limit speed is {\em discontinuous} with respect to the slope $p$. Such phenomenon is unusual in homogenization problems, and  indicates that the effective limit of \eqref{uno} as $\eps\to 0$  is governed 
by a differential operator which is discontinuous in the gradient entry. An equation similar to \eqref{uno}, with $n=1$ and $\alpha=0$, giving rise to a discontinuous effective limit problem, has been  considered in \cite{cnv}.

This makes the analysis of this limit more challenging, 
and will be the topic of a  paper in preparation \cite{cddn}, where we will consider  
the long time behaviour and the asymptotic limit of solutions to 
\begin{equation}\label{unonuovo} 
u^\eps_t - \eps^\alpha \Delta u^\eps +g\left(\frac{x}{\eps},\frac{\ue}{\eps}\right)=0 \qquad x\in\R^n, t>0 
\end{equation} 
where  $g:\R^n\times\R\to \R$ is  Lipschitz continuous and $\Z^{n+1}$-periodic.  

\medskip

\noindent{\bf Acknowledgments.} The authors would like to thank 
the mathematics departments of the Universities of Padova, Firenze 
and Pisa for the kind hospitality during the preparation of this work.
A.C. was partially supported by the GNAMPA  Project 2015 ``Processi di diffusione degeneri o singolari legati al controllo di 
dinamiche stocastiche''. N.D was partially supported by the University of Padova through the Visiting Scientist Programme, and by the Leverhulme Trust through RPG-2013-261. M.N. was partially  supported  by  the University  of  Pisa  via Grant PRA-2015-0017.
 
\section{A priori estimates on the gradient}
We consider in this section the more general case in which $g:\R^n\times\R\to \R$ is a Lipschitz, $\Z^{n+1}$ periodic function.  

We introduce, for any $p\in\R^n$,  the initial value problem 
\begin{equation}\label{eq}
\begin{cases} v^\eps_t-  \Delta v^\eps +\eps^{1-\alpha} g\left(x, v^\eps  \right)=0 \\  v^\eps(x,0)=p\cdot x.\end{cases}
\end{equation}
We recall that this problem admits a unique solution $v^\eps\in C^{2+\gamma, 1+\gamma/2}$ for all $\gamma\in (0,1)$. Moreover we recall also that, due to Lipschitz regularity of $g$, a standard comparison principle among sub and supersolutions to \eqref{eq} holds (see \cite{pw}).   

We provide an apriori bound on the oscillation and on the gradient of the solutions to \eqref{eq}, which will be useful in the following. The approach is based on the so called Bernstein type method.

In the following we let $w^\eps(x,t)=v^\eps(x,t)-p\cdot x$, where $v^\eps$ is the unique  solution to \eqref{eq}. Notice that $w^\eps$ solves
\begin{equation}\label{eqw}
\begin{cases} w_t-  \Delta w +\eps^{1-\alpha} g\left(x, w+p\cdot x \right)=0 \\ w(x,0)=0.\end{cases}
\end{equation}

\begin{theorem}\label{bernstein} Given $p\in\R^n$, then there exists $\eps_0=\eps(|p|)$ such that for every $\eps\leq \eps_0$ and every $t\geq 0$ there holds 
\[ \sup_{x\in\R^n}|Dw^\eps(x,t)|\leq C_n \eps^{\frac{1-\alpha}{4}}(1+|p|)\|g\|_{1,\infty} \] 
and  \[\sup_{x\in \R^n} w^\eps(x,t)-\inf_{x\in \R^n}w^\eps(x,t)\leq 2+C_n \eps^{\frac{1-\alpha}{4}}(1+|p|)\|g\|_{1,\infty}\] where $C_n$ is a constant depending  on the space dimension and $\|g\|_{1,\infty}$ is the Lipschitz norm of $g$. 
\end{theorem}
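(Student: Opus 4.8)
The plan is to apply the Bernstein method to $z=|D\we|^2$. First I would differentiate equation \eqref{eqw} with respect to each $x_k$, multiply by $2\we_{x_k}$, and sum over $k$. Writing $D_x g$ and $g_v$ for the derivatives of $g$ in the space and in the last variable (both bounded by $\|g\|_{1,\infty}$), the standard computation produces
\[z_t-\Delta z+2|D^2\we|^2=-2\eps^{1-\a}\big(D\we\cdot D_x g+g_v\,z+g_v\,D\we\cdot p\big).\]
Bounding the right-hand side and discarding the nonnegative dissipation $2|D^2\we|^2$ where convenient gives the differential inequality
\[z_t-\Delta z\le 2\eps^{1-\a}\|g\|_{1,\infty}\big((1+|p|)\,z^{1/2}+z\big).\]
The reaction here has the \emph{wrong} sign, so turning this into a time-uniform bound on an unbounded domain is the real content of the theorem.

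Independently, I would extract the additive constant $2$ from a comparison argument exploiting the $\Z^{n+1}$-periodicity, requiring no gradient information. For $e\in\Z^n$, the function $\ve(x+e,t)$ solves \eqref{eq} (periodicity in $x$) with datum $p\cdot x+p\cdot e$; subtracting $k=\lfloor p\cdot e\rfloor$ and using periodicity in the last variable, $\ve(x+e,t)-k$ solves \eqref{eq} with datum $p\cdot x+(p\cdot e-k)$, which lies between $p\cdot x$ and $p\cdot x+1$. Comparing with $\ve$ and with $\ve+1$ (both solutions, again by periodicity) yields $|\ve(x+e,t)-\ve(x,t)-k|\le 1$, that is $|\we(x+e,t)-\we(x,t)|\le 1$ for all $e\in\Z^n$, $x$, $t$. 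This is the source of the constant $2$ in the oscillation estimate.

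For the gradient bound I would work with an auxiliary function $\psi=z+h(\we)$, choosing $h$ convex enough ($h''\approx 2\eps^{1-\a}\|g\|_{1,\infty}+\eta$) that the term $-h''(\we)z$ coming from the Laplacian dominates the reaction $2\eps^{1-\a}g_v z$ and manufactures a genuinely negative contribution $-\eta z$. At a spatial maximum of $\psi$ one then has $\Delta\psi\le0$, hence
\[\psi_t\le -\eta z+2\eps^{1-\a}\|g\|_{1,\infty}(1+|p|)\,z^{1/2}+\eps^{1-\a}\|g\|_{1,\infty}\,|h'(\we)|.\]
Since $\psi_t<0$ once $z$ exceeds the level where the right-hand side vanishes, $\sup_x\psi(\cdot,t)$ cannot grow past that level, giving a time-uniform bound. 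Balancing $-\eta z$ against the $z^{1/2}$-forcing and the residual $h'$-term, and choosing $\eta$ accordingly, yields $|D\we|\le C_n\eps^{(1-\a)/4}(1+|p|)\|g\|_{1,\infty}$; I would not try to optimize, and any positive power of $\eps^{1-\a}$ suffices for the later convergence arguments. The oscillation estimate then follows: two arbitrary points differ by a lattice vector, contributing at most $1$ by the comparison bound, plus a displacement in the unit cell, contributing at most $\sqrt n$ times the gradient bound, for a total $2+C_n\eps^{(1-\a)/4}(1+|p|)\|g\|_{1,\infty}$.

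The hard part will be the last step, specifically two intertwined difficulties. The term $h'(\we)$ sees the \emph{actual value} of $\we$, which may drift in time even though its spatial oscillation stays bounded; to keep only the bounded oscillation in the estimate I would run the argument with $\we$ replaced by $\we-\inf_x\we(\cdot,t)$, at the cost of an extra harmless $\eps^{1-\a}$-term coming from the time-derivative of the infimum (itself controlled by $\eps^{1-\a}\|g\|_{1,\infty}$ at the minimum point). Second, since the supremum over $\R^n$ need not be attained, the maximum-principle step must be made rigorous via the near-periodicity of $\we$ established in the comparison argument, or by a sliding/approximation argument on large balls with interior estimates controlling the boundary. These are the places where the smallness $\eps\le\eps_0(|p|)$ and the periodicity are genuinely used; the Bernstein computation itself is routine.
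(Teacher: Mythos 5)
Your proposal is correct in substance and follows the same overall strategy as the paper: a Bernstein argument for $|D\we|^2$ with a zeroth-order correction in $\we$ normalized by the running extremum of $\we$ (the paper uses $\sup_x\we$, you use $\inf_x\we$; both rely on the one-sided bound $|\tfrac{d}{dt}\operatorname{ext}_x\we|\le\eps^{1-\a}\|g\|_\infty$ from comparison), together with the same $\Z^{n+1}$-periodicity comparison producing the additive constant in the oscillation, and the same bootstrap between the gradient bound and the oscillation bound. The one genuinely different ingredient is where the coercive term comes from. You take $\psi=|D\we|^2+h(\we-\inf_x\we)$ with $h$ convex and extract $-\eta|D\we|^2$ from $-h''|D\we|^2$, the classical Bernstein device. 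The paper instead uses the \emph{linear} correction $\phi^\eps=\tfrac12|D\we|^2-\lambda\we+\lambda W^\eps(t)$ and gets its negative term from the dissipation: at an (approximate) maximum of $\phi^\eps$ the condition $D\phi^\eps=0$ combined with Cauchy--Schwarz forces $\sum_{i,j}(\we_{x_ix_j})^2\ge\lambda^2$ whenever $D\we\ne0$, and $-\lambda^2$ then beats the forcing for $\lambda\sim\eps^{(1-\a)/2}(1+|p|)\|g\|_{1,\infty}$. Both mechanisms deliver the stated $\eps^{(1-\a)/4}$ rate, and for the same structural reason: the bound on $\sup_x|D\we|^2$ ends up proportional to (correction slope)$\times$(oscillation)$\approx 2\lambda$ resp. $h(\mathrm{osc})\approx\eta\,\mathrm{osc}^2$, so one loses a square root against the $\eps^{(1-\a)/2}$ scale of the correction. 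Your route introduces an extra parameter $\eta$ and a nonlinear $h$ but avoids the Hessian trick; the paper's is slightly leaner.

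One step of your sketch that must be made precise is the closure of the loop: the level past which $\sup_x\psi$ ``cannot grow'' depends on $\mathrm{osc}(\we(\cdot,t),\R^n)\le 2+\sqrt n\,\sup_x|D\we|$, i.e.\ on the very quantity being estimated, both through $|h'(\tilde w)|\le h''\cdot\mathrm{osc}$ and through $\sup_x|D\we|^2\le\sup_x\psi\le z_*+h(\mathrm{osc})$. The paper resolves this with the explicit stopping time $T=\inf\{s:\ \sup_x|D\we(x,s)|=1\}$, running the estimate on $[0,T]$ with $\mathrm{osc}\le 3$ there and then showing $T=+\infty$ once $\eps\le\eps_0(|p|)$; you will need exactly this (or an equivalent continuity argument), and it is indeed the place where the smallness of $\eps$ enters, as you anticipate. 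The non-attainment of the supremum over $\R^n$ is handled in the paper by approximate maximum sequences $x_k(t^\ast)$ with $D\phi^\eps\to0$, $\limsup_k\Delta\phi^\eps\le0$, $\liminf_k\phi^\eps_t\ge0$, which is compatible with your scheme as well.
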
 
\begin{proof}
The proof of the theorem is based on similar arguments as in  \cite[Thm 2.4, Cor. 2.5]{dky}. We divide the proof in several steps. 

{\bf Step 1}: we prove that  for every $t\geq 0$ 
\begin{equation}\label{oscillation}{\rm osc}(w^\eps(\cdot, t),[0,1]^n)\leq {\rm osc}(w^\eps(\cdot, t),\R^n)\le {\rm osc}(w^\eps(\cdot, t),[0,1]^n)+2,\end{equation} where ${\rm osc}(w^\eps(\cdot, t), A)=\sup_{x\in A} w^\eps(x,t)-\inf_{x\in A}w^\eps(x,t)$. 

Fix $t> 0$ and $\ell \in \Z^n$ and let 
$\gamma= [p\cdot \ell] +1-p\cdot \ell \in (0, 1].$
We define  $\hat w^\eps(x,t):=w^\eps(x-\ell,t)+\gamma.$   Observe that, due to periodicity of $g$, it satisfies the same equation as $w^\eps$:
\[\hat w^\eps_t(x,t)= \Delta  w^\eps(x-\ell,t)-\eps^{1-\alpha} g(x-\ell, w^\eps(x-\ell, t)+p\cdot (x-\ell))= 
\Delta \hat w^\eps-\eps^{1-\alpha}g(x,\hat w^\eps+p\cdot x).\] Moreover $\hat w^\eps(x,0):=w^\eps( x-\ell,0)+\gamma=\gamma> 0$.
Therefore  by the comparison principle, we obtain  \[w^\eps(x,t)\leq w^\eps (  x-\ell,t)+ \gamma\leq w^\eps (  x-\ell,t)+ 1 \qquad \forall x\in\R^n, \ell\in\Z^n.\]  This implies immediately \eqref{oscillation}.  

{\bf Step 2}: properties of the function $W^\eps(t)=\sup_{x\in\R^n} w^\eps (x, t)$. 

 For simplicity from now on we consider the case in which $g\in C^\infty(\R^{n+1})$. The case of $g$ just Lipschitz is recovered by a standard approximation procedure. 

Note that, by  comparison principle, $0\leq w^\eps(x,t) \leq \eps^{1-\alpha}\|g\|_\infty t$, for all $x,t$.  Moreover, again by comparison principle,  
\[\inf_{y\in\R^n} w^\eps (y,s)\leq w^\eps (x,t)\leq  \sup_{ y\in\R^n} w^\eps (y,s)+\eps^{1-\alpha}(t-s)\|g\|_\infty \qquad\forall t\geq s.\]  
This implies that the function  \[W^\eps(t)=\sup_{x\in\R^n} w^\eps (x, t)\] satisfies, in viscosity sense,  \begin{equation}\label{weps} W^\eps_t\leq \eps^{1-\alpha} \|g\|_\infty .\end{equation}  

Let $\lambda>0$ to be fixed later. We consider the function 
\[\phi^\eps(x,t)=\frac{|D w^\eps(x,t)|^2}{2}-\lambda  w^\eps(x,t)+\lambda W^\eps(t).\] 
Note that $\frac{|D w^\eps(x,t)|^2}{2}\leq \phi^\eps(x,t)\leq \frac{|D w^\eps(x,t)|^2}{2}+\lambda{\rm osc}(w^\eps(\cdot, t),\R^n)$, and moreover $\phi^\eps(x,0)=0$. 
Let $\Phi^\eps(t)=\sup_{x\in\R^n} \phi^\eps(x,t)$.

{\bf Step 3}:  we prove that choosing $\lambda = C\eps^{(1-\alpha)/2} n^{1/2} (1+|p|) (\|Dg\|_\infty+\|g\|_\infty) $, for $C>1$, then for every $\eps$ sufficiently small  \[\Phi^\eps(t)\leq \lambda\sup_{t\in [0,T]} {\rm osc}(w^\eps(\cdot, t),\R^n)\] for every $t\in [0,T]$, where  $T>0$ is  the stopping time 
\[T= \inf \{s\geq 0:\  \sup_x |D w^\eps(x,s)|=1\}.\]

Let $t^\ast$ such that $\Phi^\eps(t^\ast)\geq \Phi^\eps(t)$ for every $t \in[0,T]$.  
If $t^\ast=0$, then   $\Phi^\eps(t)\leq 0$ for every $t$ and we are done. Assume that $t^\ast>0$. 
It is easy to show that there exists  a sequence $x_k(t^\ast)$ such that \[\phi^\eps(x_k(t^\ast),t^\ast)\to \Phi^\eps(t^\ast).\] 
We can choose the sequence such that   \[D\phi^\eps(x_k(t^\ast),t^\ast)\to 0\qquad \lim_k \Delta \phi^\eps(x_k(t^\ast),t^\ast)\leq 0.\] 
Finally,  \[\lim_k \phi^\eps_t(x_k(t^\ast),t^\ast)\geq 0.\]
 
We claim that, choosing appropriately $\lambda$, $\lim_k |Dw^\eps(x_k(t^\ast), t^\ast)|=0$. If it is true, then 
\[\Phi^\eps(t)\leq  \Phi^\eps(t^\ast)\leq  \lambda {\rm osc}(w^\eps(\cdot, t^\ast),\R^n)\leq\lambda\sup_{t\in [0,T]} {\rm osc}(w^\eps(\cdot, t),\R^n),\] and then we get the desired conclusion. 

Assume by contradiction that the claim is not true. Then, for every $\lambda$,  there exists a subsequence such that 
$\lim_k |Dw^\eps(x_k(t^\ast), t^\ast)|>0$.
We compute   
\begin{equation}\label{phi}  \begin{cases} \phi^\eps_{x_i} = \sum_j w^\eps_{x_i, x_j}\we_{x_j}-\lambda \we_{x_i}\\
\phi^\eps_t =\sum_j w^\eps_{ x_j, t}\we_{x_j}-\lambda \we_{t}+\lambda W^\eps_t\\
\phi^\eps_{x_i, x_i} = \sum_j w^\eps_{x_i, x_i, x_j}\we_{x_j}+\sum_j (w^\eps_{x_i,   x_j})^2-\lambda \we_{x_i, x_i}.\end{cases}
\end{equation}   
We recall that $w^\eps$ solves \begin{equation}\label{weq}w^\eps_t-  \Delta w^\eps +\eps^{1-\alpha} g\left(x, w^\eps+p\cdot x  \right)=0.\end{equation} Differentiating the previous equation with respect to $x_j$ we get \[w^\eps_{x_j,t}= \sum_i w^\eps_{x_i, x_i, x_j}-\eps^{1-\alpha} g_{x_j}\left(x, w^\eps+p\cdot x  \right)-\eps^{1-\alpha} g_{v}\left(x, w^\eps+p\cdot x  \right)(w^\eps_{x_j}+p_j).\]
So, using  this equation, \eqref{phi}, \eqref{weq} and \eqref{weps}, we get  \begin{eqnarray}\phi^\eps_t &=&\sum_j \sum_i w^\eps_{x_i, x_i, x_j}\we_{x_j}-\eps^{1-\alpha}\sum_j\we_{x_j}(g_{x_j}+p_j g_v)-\eps^{1-\alpha} \sum_j g_{v}(w^\eps_{x_j})^2-\lambda \we_{t} +\lambda W^\eps_t \nonumber
 \\ &=&  \Delta \phi^\eps -\sum_i\sum_j (w^\eps_{x_i,   x_j})^2+\lambda \Delta \we-\eps^{1-\alpha}\sum_j\we_{x_j}(g_{x_j}+p_j g_v)-\eps^{1-\alpha} \sum_j g_{v}(w^\eps_{x_j})^2-\lambda \we_{t}+\lambda W^\eps_t \nonumber \\
 &=&  \Delta \phi^\eps -\sum_i\sum_j (w^\eps_{x_i,   x_j})^2+\eps^{1-\alpha}[\lambda g-\sum_j\we_{x_j}(g_{x_j}+p_j g_v)- \sum_j g_{v}(w^\eps_{x_j})^2]+\lambda W^\eps_t \nonumber \\
 \nonumber &\leq &  \Delta \phi^\eps -\sum_i\sum_j (w^\eps_{x_i,   x_j})^2+2\eps^{1-\alpha} \lambda \|g\|_\infty + \eps^{1-\alpha} \|Dg\|_\infty (1+|p|)\left(\sum_j|\we_{x_j}|+\sum_j|\we_{x_j}|^2 \right)\\  \label{dise1} &\leq &  \Delta \phi^\eps -\sum_i\sum_j (w^\eps_{x_i,   x_j})^2+2\eps^{1-\alpha} \lambda \|g\|_\infty +\sqrt{n}\eps^{1-\alpha} \|Dg\|_\infty (1+|p|)(|D\we|+|D\we|^2).
\end{eqnarray}
We multiply the first equation in \eqref{phi} by $\we_{x_i}$ and compute at $x_k(t^\ast)$:  we have that \[\lim_k \sum_i\sum_j w^\eps_{x_i, x_j}(x_k(t^\ast))\we_{x_j}(x_k(t^\ast))\we_{x_i}(x_k(t^\ast))-\lambda |D\we (x_k(t^\ast))|^2= 0.\] By Cauchy-Schwartz inequality 
 \[\sum_i\sum_j w^\eps_{x_i, x_j}\we_{x_j}\we_{x_i}\le|D\we|\sum_i |\we_{x_i}| \left[\sum_j (w^\eps_{x_i, x_j})^2\right]^{1/2}\leq  |Dw^\eps|^2\left(\sum_i\sum_j (w^\eps_{x_i,   x_j})^2\right)^{\frac12}.\]
This implies, since $|D\we (x_k(t^\ast))|\neq 0$, that  \[\lim_k\sum_i\sum_j (w^\eps_{x_i,   x_j}(x_k(t^\ast)))^2 \ge  \lambda^2 .\] 
 We compute \eqref{dise1} at $x_k(t^\ast)$, recalling the definition of $T$, of the sequence $x_k(t^\ast)$ and \eqref{dise1} we get 
\begin{eqnarray} 0 & \leq &  \lim_k \left[\phi^\eps_t(x_k(t^\ast), t^\ast) -  \Delta \phi^\eps (x_k(t^\ast), t^\ast)\right]\nonumber \\   & \leq &  - 	\lambda^2 + 2\eps^{1-\alpha}
\lambda \|g\|_{ \infty}+\sqrt{n}\eps^{1-\alpha}\|Dg\|_{\infty} (1+|p|) \sup_{t\in [0,T]}\sup_x (|Dw^\eps|^2+ |Dw^\eps|) \nonumber \\ & \leq &  - 	\lambda^2 + 2\eps^{1-\alpha}
\lambda \|g\|_{ \infty}+\sqrt{n}\eps^{1-\alpha}\|Dg\|_{\infty} (1+|p|). \label{dis}
\end{eqnarray}

We claim it is possible to choose  $\lambda$  so that the right hand side of \eqref{dis} is strictly negative getting then a contradiction.  Indeed if we choose $\lambda = C\eps^{(1-\alpha)/2} n^{1/2} (1+|p|) (\|Dg\|_\infty+\|g\|_\infty) $, with $C>1$, the claim is true for every $\eps$ sufficiently small.

{\bf Step 4}: conclusion. 

By the first step, the definition of $\Phi^\eps$ and \eqref{oscillation}, we get that
\begin{eqnarray*}
\sup_x\frac{|D w^\eps(x,t)|^2}{2}\leq \Phi^\eps(t) &\leq &  \lambda\sup_{t\in [0,T]} {\rm osc}(w^\eps(\cdot, t),\R^n)\\  
&\leq&   \lambda\sup_{t\in [0,T]} ({\rm osc}(w^\eps(\cdot, t),[0,1]^n)+2) 
\\&\leq &\lambda\sup_{t\in [0,T]}\sup_x|D w^\eps(x,t)| +2\lambda\\
&\leq&  \sup_{t\in [0,T]}\sup_x\frac{|D w^\eps(x,t)|^2}{4} +\lambda^2+2\lambda\,, 
\end{eqnarray*}
which in turn gives 
\begin{equation}\label{stima}\sup_x|Dw^\eps(t,x) |^2 \le 4\lambda+2\lambda^2\qquad \forall t\in[0,T]. 
\end{equation}
Recall by the first step that $\lambda=  C\eps^{(1-\alpha)/2} n^{1/2} (1+|p|) (\|Dg\|_\infty+\|g\|_\infty)$, so choosing $\eps$ sufficiently small, we obtain  $4\lambda+2\lambda^2<1$. This implies, recalling \eqref{stima}, that the stopping time $T=+\infty$. 

Finally the desired estimate  on the gradient is obtained by \eqref{stima}, recalling the explicit formula for $\lambda$, and the estimate on the oscillation is deduced by recalling \eqref{oscillation}.  
\end{proof} 

\section{Existence of almost-planar solutions}
In this section, we show in fact that there exists for every $p\in\R^n$ a  positive constant $c_\eps(p)$ such that  solutions to \eqref{uno} starting from hyperplanes $z=p\cdot x$ remain  at a small distance from   hyperplanes with the same normal and  moving with (uniformly bounded in $\eps$) speed $c_\eps(p)\eps^{ \alpha-1}$.   Moreover we study the limit as $\eps\to 0 $ to $c_\eps(p)\eps^{ \alpha-1}$: this will be the average speed of hyperplanes. 
 \begin{theorem}\label{exc}
  Let $v^\eps$ be the solution to \eqref{eq}. 
	Then there exists a unique $c_\eps(p)>0$ such that \[|v^\eps(x,t)-p\cdot x-c_\eps(p) t|\leq K\] where $K$ depends 	on the Lipschitz norm of $g$.
Moreover $$0<c_\eps(p)\leq \eps^{1-\alpha}\|g\|_\infty\,.$$ 
\end{theorem}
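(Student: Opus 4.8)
The plan is to reduce the statement to a purely temporal question about the growth of $W^\eps(t):=\sup_{x\in\R^n}w^\eps(x,t)$, where $w^\eps=v^\eps-p\cdot x$ solves \eqref{eqw}, and to extract $c_\eps(p)$ as the asymptotic slope of $W^\eps$. First I would record the elementary consequences of the comparison principle \cite{pw}: since $g\le 0$ and $w^\eps(\cdot,0)=0$, the constant $0$ is a subsolution, so $0\le w^\eps$; comparing $w^\eps(\cdot,\theta)$ at the initial time $\theta=0$ with the admissible initial datum $w^\eps(\cdot,h)\ge 0$ shows that $w^\eps$ is nondecreasing in $t$; and the supersolution $\eps^{1-\alpha}\|g\|_\infty t$ gives $0\le w^\eps(x,t)\le \eps^{1-\alpha}\|g\|_\infty t$, exactly as in Step~2 of the proof of Theorem \ref{bernstein}. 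In particular $W^\eps$ is nondecreasing and $0\le W^\eps(t)\le \eps^{1-\alpha}\|g\|_\infty t$.

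The core of the argument is an almost-subadditivity estimate obtained from the $1$-periodicity of $g$ in its last variable. Fix $s,\tau>0$ and set $u(x,\theta):=w^\eps(x,s+\theta)$, which solves the same autonomous equation \eqref{eqw} with $u(\cdot,0)=w^\eps(\cdot,s)$. Because adding any integer $m$ to $w^\eps$ produces another solution of \eqref{eqw}, the choice $m=\lceil W^\eps(s)\rceil$ gives a comparison function $w^\eps(x,\theta)+m$ whose initial value $m\ge W^\eps(s)\ge w^\eps(x,s)=u(x,0)$ dominates that of $u$; the comparison principle then yields $w^\eps(x,s+\tau)\le w^\eps(x,\tau)+\lceil W^\eps(s)\rceil$, and taking the supremum in $x$ gives $W^\eps(s+\tau)\le W^\eps(\tau)+W^\eps(s)+1$. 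By the subadditive (Fekete) lemma applied to $t\mapsto W^\eps(t)+1$, the limit $c_\eps(p):=\lim_{t\to\infty}W^\eps(t)/t$ exists, equals $\inf_{t>0}(W^\eps(t)+1)/t$, satisfies $c_\eps(p)\le\eps^{1-\alpha}\|g\|_\infty$ by the bound above, and obeys the one-sided estimate $W^\eps(t)\ge c_\eps(p)\,t-1$.

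Next I would run the symmetric argument for $\underline{W}^\eps(t):=\inf_x w^\eps(x,t)$, using the integer $\lfloor\underline{W}^\eps(s)\rfloor$, to obtain the reverse almost-superadditivity $\underline{W}^\eps(s+\tau)\ge \underline{W}^\eps(\tau)+\underline{W}^\eps(s)-1$, hence $\underline{W}^\eps(t)\le(\lim\underline{W}^\eps(t)/t)\,t+1$. Here the uniform oscillation bound of Theorem \ref{bernstein}, $W^\eps(t)-\underline{W}^\eps(t)={\rm osc}(w^\eps(\cdot,t),\R^n)\le M$ with $M=2+C_n\eps^{(1-\alpha)/4}(1+|p|)\|g\|_{1,\infty}$, forces the two temporal slopes to coincide, so $\lim\underline{W}^\eps(t)/t=c_\eps(p)$ and $\underline{W}^\eps(t)\le c_\eps(p)\,t+1$. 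Combining the four inequalities, for every $x,t$,
\[
c_\eps(p)\,t-1-M\le \underline{W}^\eps(t)\le w^\eps(x,t)\le W^\eps(t)\le \underline{W}^\eps(t)+M\le c_\eps(p)\,t+1+M,
\]
which is precisely $|v^\eps(x,t)-p\cdot x-c_\eps(p)\,t|\le K$ with $K=M+1$ depending, through $M$, on $n$, $|p|$, $\eps$ and the Lipschitz norm of $g$. Uniqueness is then immediate: if two constants $c,c'$ both satisfy such a bound then $|c-c'|\,t\le 2K$ for all $t$, so $c=c'$.

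It remains to establish strict positivity, which I expect to be the main obstacle, since it is the only step genuinely sensitive to the structure of $g$ rather than to soft comparison and periodicity arguments. The sign condition $g\le 0$ gives only $c_\eps(p)\ge 0$. When $g$ is bounded away from zero, say $g\le-\delta<0$, the affine subsolution $\eps^{1-\alpha}\delta t$ immediately yields $c_\eps(p)\ge\eps^{1-\alpha}\delta>0$. In the general case I would argue by contradiction: if $c_\eps(p)=0$, then the nondecreasing family $w^\eps(\cdot,t)$, uniformly bounded by the oscillation estimate, converges by parabolic regularity to a bounded stationary solution $w_\infty$ of $-\Delta w_\infty+\eps^{1-\alpha}g(x,w_\infty+p\cdot x)=0$ with bounded gradient. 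Excluding such a standing profile by a maximum-principle argument at near-extremal points, using the non-degeneracy of $g$, is where the real work lies; this is precisely the mechanism that later distinguishes the cases $p\ne 0$ and $p=0$ in Proposition \ref{effective}.
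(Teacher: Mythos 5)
Your construction of $c_\eps(p)$ is sound and is essentially the argument the paper is pointing to when it cites \cite[Thm 3.1]{dky} together with Theorem \ref{bernstein}: comparison plus $\Z$-periodicity of $g$ in the $v$-variable gives the almost sub/superadditivity of $W^\eps(t)=\sup_x w^\eps(x,t)$ and $\underline{W}^\eps(t)=\inf_x w^\eps(x,t)$, Fekete's lemma produces the common temporal slope, and the uniform oscillation bound of Theorem \ref{bernstein} converts the two one-sided linear bounds into $|w^\eps(x,t)-c_\eps(p)t|\le K$. The upper bound $c_\eps(p)\le\eps^{1-\alpha}\|g\|_\infty$ and the uniqueness of $c_\eps(p)$ follow exactly as you say (note only that your $K$ also depends on $n$ and $|p|$, and that the oscillation bound requires $\eps\le\eps_0(|p|)$). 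So for everything except strict positivity your proof is complete and follows the intended route.

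The genuine gap is the claim $c_\eps(p)>0$, which you explicitly leave open (``where the real work lies''). It cannot be closed in the generality of the statement, because strict positivity is actually false in a degenerate case that the paper itself exhibits: in Proposition \ref{effective}, for $p=0$ and $\max g=0$ the spatially constant solution is trapped, $v_0-1\le v(t)\le v_0$, and hence $c^\eps(0)=0$. Your proposed contradiction argument --- passing to a bounded stationary limit and excluding it by a maximum principle --- must fail precisely there, since a standing profile does exist when $g$ has a zero and $p=0$. What your argument proves unconditionally is $c_\eps(p)\ge 0$. Strict positivity needs an extra hypothesis: either $g\le-\delta<0$, which gives it directly via the subsolution $\eps^{1-\alpha}\delta t$ as you note, or $p\ne 0$ with $\int_0^1 g<0$ and $\eps$ small, which follows a posteriori from the expansion $c^\eps(p)=-\eps^{1-\alpha}\int_0^1 g\,ds+o(\eps^{1-\alpha})$ in Proposition \ref{effective} (whose derivation uses only the existence of the limit slope, which you have already established). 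So the correct reading of the theorem is $c_\eps(p)\ge 0$, with strict positivity in the nondegenerate cases, and your proof minus its last paragraph establishes exactly that.
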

The proof of this theorem is based on the apriori estimates provided in Theorem \ref{bernstein}, and follows as  in  \cite[Thm 3.1]{dky} (see also \cite{cddn} for a proof of a more general result of this kind). 

\begin{remark}\rm 
Observe that if $u^\eps$ is the  solution to \eqref{uno} with initial datum $u^\eps(x,0)=p\cdot x$, then 
$v^\eps(x,t)= \frac{1}{\eps} u^\eps(\eps x, \eps^{2-\alpha}t)$ is the  solution to  \eqref{eq}. So 
Theorem \ref{exc} implies
 \[|u^\eps(x,t)-p\cdot x-c^\eps(p)\eps^{\alpha-1} t|\leq K\eps.\] 
\end{remark} 
\begin{remark}\rm Note that, when $\alpha=1$, the speed $c^\eps(p)$ does not depend on $\eps$. 
When $\alpha=1$ and the laplacian in \eqref{uno} is substituted by 
the mean curvature operator, 
the existence of traveling wave solutions
has been established in \cite{dky}, under suitable assumptions on the forcing term $g$.  
\end{remark}

\smallskip

\begin{proposition}\label{effective} Let $c^\eps(p)$   as in Theorem \ref{exc}. Then
\begin{equation}\label{effc}
\lim_{\eps\to 0} c^\eps(p)\,\eps^{ \alpha-1}=c(p)=
\begin{cases}
-\int_0^1 g(s)ds & p\neq 0
\\ 
-\left(\int_0^1 \frac{1}{g(s)}ds\right)^{-1} & p=0,\  g<0
\\
0 & p=0,\ \max g=0 .\end{cases} \end{equation} 
\end{proposition}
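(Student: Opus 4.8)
The plan is to treat the cases $p\neq 0$ and $p=0$ separately, since the degeneracy at $p=0$ is exactly what produces the discontinuity of $c(p)$. Throughout write $m:=\int_0^1 g(s)\,ds$, and note $m\le 0$.

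\textbf{Reduction and the case $p\neq 0$.} Since $g=g(v)$ carries no $x$-dependence, equation \eqref{eq} is invariant under rotations and translations of $\R^n$; hence the solution with planar datum $p\cdot x$ depends only on $\xi:=\frac{p}{|p|}\cdot x$ and on $t$, and $V^\eps(\xi,t):=v^\eps(x,t)$ solves the one-dimensional problem $V_t-V_{\xi\xi}+\eps^{1-\alpha}g(V)=0$, $V(\xi,0)=|p|\,\xi$. First I would record a genuine periodicity: the function $\tilde V(\xi,t):=V(\xi+1/|p|,t)-1$ solves the same equation (by $1$-periodicity of $g$) with the same initial datum, so by uniqueness $V(\xi+1/|p|,t)=V(\xi,t)+1$; equivalently $W:=V-|p|\xi$ is $1/|p|$-periodic in $\xi$. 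Introducing the spatial average $A(t):=|p|\int_0^{1/|p|}V(\xi,t)\,d\xi$ and using the equation,
\begin{equation*}
A'(t)=|p|\int_0^{1/|p|}\big(V_{\xi\xi}-\eps^{1-\alpha}g(V)\big)\,d\xi=-\,\eps^{1-\alpha}\,|p|\int_0^{1/|p|}g(V)\,d\xi,
\end{equation*}
because the periodicity of $W$ makes the total-derivative term $[V_\xi]_0^{1/|p|}$ vanish. This identity is the crux: it expresses the instantaneous speed as a spatial average of $g$ along the moving profile.

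The last ingredient is Theorem \ref{bernstein}. Since $V_\xi=|p|+W_\xi$ and $|W_\xi|\le C_n\eps^{(1-\alpha)/4}(1+|p|)\|g\|_{1,\infty}$, for $\eps$ small (depending on $|p|$) one has $V_\xi>|p|/2>0$, so $\xi\mapsto V(\xi,t)$ is a strictly increasing change of variables on each period. Setting $v=V(\xi,t)$, writing $v_0(t)=V(0,t)$, and using $\int_{v_0}^{v_0+1}g=\int_0^1 g=m$,
\begin{equation*}
|p|\int_0^{1/|p|}g(V)\,d\xi=|p|\int_{v_0(t)}^{v_0(t)+1}\frac{g(v)}{V_\xi}\,dv=m+O\!\left(\eps^{(1-\alpha)/4}\right),
\end{equation*}
the error being uniform in $t$ since $\frac1{V_\xi}=\frac1{|p|}\big(1+O(\eps^{(1-\alpha)/4})\big)$ uniformly, by the gradient bound. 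Hence $A'(t)=-\eps^{1-\alpha}\big(m+O(\eps^{(1-\alpha)/4})\big)$ for every $t$, so $A(t)/t\to-\eps^{1-\alpha}\big(m+O(\eps^{(1-\alpha)/4})\big)$; on the other hand Theorem \ref{exc} gives $|A(t)-\tfrac12-c^\eps(p)\,t|\le K$, so $A(t)/t\to c^\eps(p)$. Comparing and dividing by $\eps^{1-\alpha}$, letting $\eps\to 0$, yields $c^\eps(p)\eps^{\alpha-1}\to -m=-\int_0^1 g$.

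\textbf{The case $p=0$.} Here the datum is constant, so by uniqueness $v^\eps(x,t)=V^\eps(t)$ is $x$-independent and solves $\dot V=-\eps^{1-\alpha}g(V)$, $V(0)=0$; as $g\le 0$, $V$ is nondecreasing. If $g<0$ strictly, $V$ is strictly increasing and the time to advance one period is $\int_0^1 dv/(-\eps^{1-\alpha}g(v))$, so the average speed is its reciprocal, $c^\eps(0)=\eps^{1-\alpha}\big(\int_0^1 \frac{1}{-g(v)}\,dv\big)^{-1}$, whence $c^\eps(0)\eps^{\alpha-1}\to -\big(\int_0^1 \frac{1}{g(s)}\,ds\big)^{-1}$. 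If instead $\max g=0$, let $v^\ast\in(0,1]$ be the first zero of $g$ at or above $0$; since $g$ is Lipschitz, $-g(v)\le L(v^\ast-v)$ near $v^\ast$, so $\int^{v^\ast}dv/(-g(v))=+\infty$ and $V(t)\nearrow v^\ast$ without ever reaching it. Thus $V$ stays bounded, $V(t)/t\to 0$, and $c^\eps(0)\eps^{\alpha-1}\to 0$.

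\textbf{Main obstacle.} The delicate point is the $p\neq 0$ computation: one must control the change of variables $v=V(\xi,t)$ uniformly in $t$, and this is precisely where the Bernstein estimate of Theorem \ref{bernstein} (both the monotonicity $V_\xi>0$ and the quantitative $O(\eps^{(1-\alpha)/4})$ flatness) is indispensable. It also explains the discontinuity: the ergodic averaging $\frac1{|p|}\int_{v_0}^{v_0+1}g\,dv=\int_0^1 g$ relies on the profile sweeping a full period of $g$ over one spatial cell, a mechanism that degenerates as $|p|\to 0$ (the cell length $1/|p|\to\infty$ and $W_\xi/|p|$ can no longer be made small), so the plateau behaviour of the $p=0$ ODE, with its harmonic-type average $\big(\int_0^1 \tfrac1g\big)^{-1}$, takes over in the limit.
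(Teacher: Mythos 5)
Your proof is correct and follows essentially the same route as the paper: the $p=0$ case is the same ODE integration, and for $p\neq 0$ both arguments reduce to one spatial dimension, integrate the equation over a spatial period to express $c^\eps(p)$ as an average of $g$ along the moving profile, and invoke Theorem \ref{bernstein} to identify that average with $\int_0^1 g +O(\eps^{(1-\alpha)/4})$. The only (harmless) variation is in that last step, where you change variables $v=V(\xi,t)$ using the monotonicity $V_\xi\ge |p|/2$ guaranteed by the gradient bound, whereas the paper replaces $\chi_p(\cdot,t)$ by its spatial mean and uses the Lipschitz continuity and periodicity of $g$.
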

 
\begin{proof}
For  $p=0$, equation \eqref{eq} becomes
\[
\begin{cases} v'(t)+\eps^{1-\alpha} g(v)=0
\\ v(0)=0.\end{cases}
\]
So, if $g<0$, by  direct integration we get 
\[\int_0^{v(t)} \frac{ds}{g(s)}=-\eps^{1-\alpha} t.\] 
In particular $v$ is monotone increasing, unbounded and
\[
\lim_{t\to +\infty} \frac{v(t)}{t} =-\eps^{1-\alpha} 
\left(\int_0^1 \frac{1}{g(s)}ds\right)^{-1} = c^\eps(0).
\]

If, on the other hand there exists $v_0\in[0,1]$ such that $g(v_0)=0$, then by comparison
$v_0-1\leq v(t)\leq v_0$ for every $t$, and then $c^\eps(0)=0$.  

For $p\neq 0$, we define $\chi_p:\R\times (0, +\infty)\to \R$ as $\chi_p(p\cdot x,t)=w^\eps(x,t)$, where $w^\eps$ solves \eqref{eqw}. 
Then $\chi_p$ solves
\begin{equation}\label{equnodim}
\begin{cases}(\chi_p)_t-|p|^2 (\chi_p)_{zz} +\eps^{1-\alpha}g\left(\chi_p(z,t)+z\right)=0 & t>0, z\in\R
\\ \chi_p(z,0)=0\, \end{cases}
\end{equation} and is $1$-periodic. We integrate in space and time \eqref{equnodim} and, using periodicity of $\chi_p$, we obtain  

\[   
\int_0^1\frac{1}{t}\int_0^t (\chi_p)_t dtdz=-\eps^{1-\alpha}\frac{1}{t}\int_0^t   \int_0^1  g(\chi_p+z)dzdt.
\] So \[ \frac{1}{t}\int_0^t (\chi_p)_t dt  =  \frac{\chi_p(z,t)}{t} \to c^\eps(p)\qquad t\to +\infty.\] 

Using  the a priori bounds on the  derivative $(\chi_p)_z$ given by Theorem \ref{bernstein} and the fact that $g$ is Lipschitz,  we get  \[\int_0^1  g(z+\chi(z,t))dz=\int_0^1 g\left(z+\int_0^1 \chi(\zeta,t)d\zeta \right)dz +O(\eps^{\frac{1-\alpha}{4}})=\int_0^1 g\left(z \right)dz +O( \eps^{\frac{1-\alpha}{4}})\]
where the error term $O(\eps^{\frac{1-\alpha}{4}})$ is independent of $t$.  
This gives \[c^\eps(p)= -\eps^{1-\alpha}  \int_0^1 g(z)dz +o(\eps^{1-\alpha}).  \]\end{proof} 

%

\section{Asymptotic speed of propagation}
In this section we provide the asymptotic speed of propagation of solutions to \eqref{uno} as $\eps\to 0$ starting from almost  planar initial datum.  When the initial datum  is an hyperplane, we obtain the homogenization limit of solutions to \eqref{eq}.
\begin{theorem}  \label{omo}
Let $u_\eps$ be the solution to \begin{equation}\label{eqeqplane}
\begin{cases}\frac{\partial u^\eps }{\partial t}- \eps^\alpha\Delta u^\eps +g\left( \frac{\ue}{\eps}\right)=0 & t>0, x\in\R^n
\\ u^\eps(x,0)= u_0(x).\end{cases}
\end{equation} 
Assume that there exists $p\in\R^n$ such that \begin{equation}\label{ap} u_0(x)-p\cdot x=v_0(x)\in L^\infty(\R^n).\end{equation}

Then    
\[  
p\cdot x +c(p)t +\inf v_0 \leq
\lim\inf_{\eps\to 0}  u^\eps(x,t)  
\leq \lim\sup_{\eps\to 0}   u^\eps(x,t)   
\leq p\cdot x +c(p)t+\sup v_0\, \] 
where $c(p)$ is defined in \eqref{effc}. 

In particular, if $u_0(x)-p\cdot x\equiv k$  for $k\in\R$, then the solution $u^\eps$ to \eqref{eqeqplane} satisfies 
\[\lim_{\eps\to 0} u_\eps(x,t)=p\cdot x + c(p)t +k\qquad \text{uniformly in $x\in\R^n, t\geq 0$}.\] 
\end{theorem}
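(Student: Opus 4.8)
The plan is to prove the two-sided asymptotic estimate by a comparison argument, using the planar special solutions from Theorem \ref{exc} as barriers. The essential point is that the equation \eqref{eqeqplane} satisfies a comparison principle (the same one referenced after \eqref{eq}), and that the rescaling $v^\eps(x,t)=\frac{1}{\eps}u^\eps(\eps x,\eps^{2-\alpha}t)$ converts \eqref{eqeqplane} into \eqref{eq}. Since the almost-planar datum satisfies $u_0(x)=p\cdot x+v_0(x)$ with $v_0\in L^\infty$, I would first sandwich $u_0$ between two genuine hyperplanes: $p\cdot x+\inf v_0\leq u_0(x)\leq p\cdot x+\sup v_0$.

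First I would introduce the solution $\tilde u^\eps$ to \eqref{eqeqplane} with the exactly planar initial datum $p\cdot x+\sup v_0$, and similarly with $p\cdot x+\inf v_0$. By the comparison principle, $u^\eps$ lies between these two. The Remark following Theorem \ref{exc} gives, for the solution with planar datum $p\cdot x$ (up to the additive constant, which is preserved because the equation is autonomous in the $x$-independent case $g=g(u^\eps/\eps)$), the bound $|\tilde u^\eps(x,t)-p\cdot x-\sup v_0-c^\eps(p)\eps^{\alpha-1}t|\leq K\eps$, and similarly for the lower barrier. Thus I have
\[
p\cdot x+\inf v_0+c^\eps(p)\eps^{\alpha-1}t-K\eps\leq u^\eps(x,t)\leq p\cdot x+\sup v_0+c^\eps(p)\eps^{\alpha-1}t+K\eps.
\]
The second step is to pass to the limit $\eps\to 0$: by Proposition \ref{effective}, $c^\eps(p)\eps^{\alpha-1}\to c(p)$, while $K\eps\to 0$, so taking $\liminf$ and $\limsup$ termwise yields exactly the displayed two-sided bound of the theorem. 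The uniform-convergence conclusion when $v_0\equiv k$ is then immediate, since $\inf v_0=\sup v_0=k$ collapses the sandwich to equality, and the error $K\eps$ is uniform in $x,t$.

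The main obstacle I anticipate is verifying that the additive constant shift is genuinely harmless, i.e. that the barrier solution with datum $p\cdot x+k$ satisfies precisely the bound of Theorem \ref{exc} with the \emph{same} speed $c^\eps(p)$ and $K$ independent of $k$. In the present $x$-independent setting $g=g(\ue/\eps)$ this follows because adding an integer multiple of $\eps$ to the datum is an exact symmetry (by $1$-periodicity of $g$, the map $u\mapsto u+\eps$ sends solutions to solutions), while a non-integer shift is controlled by comparison with the two nearest integer shifts; this is exactly the mechanism already used in Step 1 of the proof of Theorem \ref{bernstein}. I would therefore reduce to integer shifts and invoke periodicity, so that the constant $K$ and the speed $c^\eps(p)$ are indeed independent of the additive constant. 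The only remaining care is uniformity of the error in $t$, which is guaranteed because $K$ in Theorem \ref{exc} depends only on the Lipschitz norm of $g$.
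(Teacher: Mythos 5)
Your proposal is correct and follows essentially the same route as the paper: sandwich $u^\eps$ between solutions with planar data via the comparison principle, invoke Theorem \ref{exc} (through the rescaling $v^\eps(x,t)=\frac{1}{\eps}u^\eps(\eps x,\eps^{2-\alpha}t)$) for the barriers, and pass to the limit using Proposition \ref{effective}. In particular, your handling of the additive constant --- reducing to integer multiples of $\eps$ by periodicity of $g$ and comparing with the nearest such shifts --- is exactly the device the paper uses, taking initial data $p\cdot x+\eps\left[\frac{\sup v_0}{\eps}\right]+\eps$ and $p\cdot x+\eps\left[\frac{\inf v_0}{\eps}\right]$.
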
 

\begin{proof} 
 
We consider the solutions $u^\eps_+$, $u^\eps_{-}$ to \eqref{eqeqplane} with initial datum 
$\ue_+(x,0)=p\cdot x+\eps \left[\frac{\sup v_0}{\eps}\right]+\eps $ and $\ue_{-}(x,0)=p\cdot x+\eps \left[\frac{\inf v_0}{\eps}\right] $. By comparison $\ue_{-}(x
,t)\leq \ue(x,t)\leq \ue_+(x,t)$ for every $x,t$. 
We can rewrite 
\[
\ue_{-}(x,t)=p\cdot x +\eps w^\eps\left(\frac{x}{\eps}, \frac{t}{\eps^{2-\alpha}}\right)+\eps \left[\frac{\inf v_0}{\eps}\right]\] and analogously $\ue_+$, 
where $w^\eps$ is the solution to \eqref{eqw}. 


So we get that \begin{equation}\ue_{-}(x,t)=p\cdot x+\eps^{\alpha-1}c^\eps(p)t +
\eps \left(w^\eps\left(\frac{x}{\eps}, \frac{t}{\eps^{2-\alpha}}\right)-c^\eps(p)\frac{t}{\eps^{2-\alpha}}\right)+\eps \left[\frac{\inf v_0}{\eps}\right]  \end{equation} and analogously for $\ue_+$. 
By comparison we get that 
\begin{eqnarray*}
\eps \left(w^\eps\left(\frac{  x}{\eps}, \frac{t}{\eps^{2-\alpha}}\right)-c^\eps(p)\frac{t}{\eps^{2-\alpha}}\right)+\eps \left[\frac{\inf v_0}{\eps}\right] 
&\leq& u^\eps(x,t)-p\cdot x-\eps^{\alpha-1}c^\eps(p)t 
\\
&\leq& 
\eps \left(w^\eps\left(\frac{  x}{\eps}, \frac{t}{\eps^{2-\alpha}}\right)-c^\eps(p)\frac{t}{\eps^{2-\alpha}}\right)+\eps  \left[\frac{\sup v_0}{\eps}\right] +\eps.
\end{eqnarray*}
Recalling Theorem \ref{exc}, and  letting $\eps\to 0$,  we get the thesis.

\end{proof}



\begin{thebibliography}{99}
\bibitem{m} N. Alibaud, A. Briani, R. Monneau. 
\newblock Diffusion as a singular homogenization of the Frenkel-Kontorova model. 
\newblock {\em J. Differential Equations } 251, no. 4-5, 785--815, 2011. 
\bibitem{bs} G. Barles, P. E. Souganidis.
\newblock Space-time periodic solutions 
and long-time behavior of solutions to quasi-linear parabolic equations. 
\newblock{\em SIAM J. Math. Anal. }32, no. 6, 1311--1323, 2011.
\bibitem{bh}
H. Berestycki, F. Hamel.
\newblock Generalized travelling waves for reaction-diffusion equations.
\newblock In: {\em Perspectives in Nonlinear Partial Differential Equations. In honor of H. Brezis.} 
Contemp. Math. 446, Amer. Math. Soc., 101--123, 2007.
\bibitem{cm} L.A. Caffarelli, R. Monneau.
\newblock Counter-example in three dimension and homogenization of geometric motions in two dimension.
\newblock{\em Arch. Ration. Mech. Anal.} 212, no. 2, 503--574, 2014.
\bibitem{cls} P. Cardaliaguet, P.-L. Lions, P. E. Souganidis. 
\newblock A discussion about the homogenization of moving interfaces.
\newblock{\em J. Math. Pures Appl.} 91, 339--363, 2009.
\bibitem{cddn} A. Cesaroni, N. Dirr, F. Dragoni,  M. Novaga.  
\newblock In preparation.
\bibitem{cn} A. Cesaroni, M. Novaga.  \newblock Long-time behavior of
  the mean curvature flow with periodic forcing.  
\newblock {\em Comm. Partial Differential Equations} 38, 780--801, 2013.
\bibitem{cnv} A. Cesaroni, M. Novaga, E. Valdinoci.  
\newblock Curve shortening flow in heterogeneous media.
\newblock {\em Interfaces and Free Boundaries} 13(4), 617--635, 2011.
\bibitem{dky} N. Dirr, G. Karali, N. K. Yip.
\newblock Pulsating wave for mean curvature flow in inhomogeneous medium. 
\newblock {\em Eur. J. of Applied Mathematics} 19, 661--699, 2008.
\bibitem{dy} N. Dirr, N. K. Yip. 
\newblock Pinning and de-pinning phenomena in front propagation in heterogeneous media. 
\newblock{\em Interfaces and Free Boundaries} 8(1), 79--109, 2006.
\bibitem{ds} N. Dirr, P.E. Souganidis.
\newblock Large-time behavior for viscous and nonviscous Hamilton-Jacobi equations forced by additive noise. \newblock{\em SIAM J. Math. Anal.} 37, no. 3, 777�-796, 2005.  
\bibitem{j} R. Jerrard, 
\newblock Singular limits of scalar Ginzburg-Landau equations with multiple-well potentials, 
\newblock {\em Adv. Differential Equations} 2 (1)  1--38, 1997.
\bibitem{ls} P.-L. Lions,  P. E. Souganidis. 
\newblock Homogenization of degenerate second-order PDE in periodic and almost periodic environments and applications. 
\newblock{\em Ann. Inst. H. Poincar\'e Anal. Non Lin\'eaire} 22, no. 5, 667�-677, 2005. 
\bibitem{nr} G. Namah, J.-M. Roquejoffre.
\newblock Convergence to periodic fronts in a class of semi-linear parabolic equations.
\newblock {\em Nonlinear Diff. Eq. Appl.} 4, 521�-536,1998.
\bibitem{pw} M.~H. Protter, H.~F. Weinberger.
\newblock {\em Maximum principles in differential equations}. 
\newblock Springer-Verlag, New York, 1984.
\end{thebibliography}
\end{document}